\newtheorem{thm}{Theorem}
\newtheorem{lmm}[thm]{Lemma}
\newcommand{\bbx}{\mathbf{X}}
\newcommand{\bby}{\mathbf{Y}}
\newcommand{\bbz}{\mathbf{Z}}
\newcommand{\bx}{\mathbf{x}}
\newcommand{\cov}{\mathrm{Cov}}
\newcommand{\ee}{\mathbb{E}}
\newcommand{\mf}{\mathcal{F}}
\newcommand{\rr}{\mathbb{R}}
\newcommand{\ve}{\varepsilon}
\begin{document}
\title{Assumptionless consistency of the Lasso}
\author{Sourav Chatterjee}
\address{Department of Statistics, Stanford University}
\email{souravc@stanford.edu}
\thanks{Sourav Chatterjee's research was partially supported by the  NSF grant DMS-1005312}

\begin{abstract}
The Lasso is a popular statistical tool invented by Robert Tibshirani for linear regression when the number of covariates is greater than or comparable to the number of observations. The purpose of this note is to highlight the simple fact (noted in a number of earlier papers in various guises) that for the loss function considered in Tibshirani's original paper,  the Lasso is consistent under almost no  assumptions at~all. 
\end{abstract}
\maketitle

\section{Introduction}\label{intro}
The Lasso is a penalized regression procedure introduced by  Tibshirani~\cite{tibs96} in 1996. Given response variables $y_1,\ldots, y_n$ and $p$-dimensional  covariates $\bx_1,\ldots, \bx_n$, the Lasso fits the linear regression model 
\[
\ee(y_i\mid \bx_i) = {\boldsymbol \beta}\cdot \bx_i
\]
by minimizing the $\ell^1$ penalized squared error
\[
\sum_{i=1}^n (y_i - {\boldsymbol \beta} \cdot \bx_i)^2 + \lambda\sum_{j=1}^p |\beta_j|, 
\]
where ${\boldsymbol \beta} = (\beta_1,\ldots, \beta_p)$ is the vector of regression parameters and $\lambda$ is a penalization parameter. As $\lambda$ increases, the Lasso estimates are shrunk towards zero. An interesting and useful feature of the Lasso is that it is well-defined even if $p$ is greater than $n$. Not only that, often only a small fraction of the estimated $\beta_i$'s turn out to be non-zero, thereby producing an effect of automatic variable selection. And thirdly, there is a fast and simple procedure for computing the Lasso estimates  simultaneously for all $\lambda$ using the Least Angle Regression (LARS) algorithm of Efron et.\ al.\ \cite{efronetal04}. The success of Lasso stems from all of these factors. 

There have been numerous efforts to give conditions under which the Lasso `works'. Much of this work has its origins in the investigations of $\ell^1$ penalization by David Donoho and coauthors (some of it predating Tibshirani's original paper) \cite{donoho04, donohoelad02, donohohuo02, donohojohnstone94, donohoetal95}. Major advances were made by Osborne et.~al.~\cite{osborneetal00}, Knight and Fu \cite{knightfu00}, Fan and Li \cite{fanli01}, Meinshausen and B\"uhlmann \cite{meinshausenbuhlmann06},  Yuan and Lin \cite{yuanlin07}, Zhao and Yu \cite{zhaoyu07}, Zou \cite{zou06}, Greenshtein and Ritov \cite{gr}, Bunea et.\ al.~\cite{btw06, btw07}, Cand\`es and Tao \cite{candestao05, candestao07}, Zhang and Huang \cite{zhanghuang08}, Lounici \cite{lounici08}, Bickel et.\ al.~\cite{bickeletal09}, Zhang \cite{zhang09}, Koltchinskii \cite{kol09}, Rigollet and Tsybakov \cite{rigollet}, Wainwright \cite{wainwright09}, Bartlett et.~al.~\cite{bartlettetal12}, and many other authors. Indeed, it is a daunting task to compile a thorough review of the literature. Fortunately, this daunting task has been accomplished in the recent  book of B\"uhlmann and van de Geer~\cite{bg11}, to which we refer the reader for an extensive bibliography and a comprehensive treatment of the Lasso and its many variants. 


Quite recently, researchers have realized that if the performance of the Lasso is measured using the so-called `prediction loss', then almost no assumptions are required for consistency. For example, this has been noted in Rigollet and Tsybakov \cite{rigollet},  B\"uhlmann and van de Geer~\cite{bg11} and Bartlett et.~al.~\cite{bartlettetal12}. The purpose of this note is to record this simple but striking fact, and give a clean presentation with a short proof.

\vskip.3in

\section{The setup}\label{setup}
Suppose that $X_1,\ldots, X_p$ are (possibly dependent) random variables, and $M$ is a constant such that 
\begin{equation}\label{range}
|X_j|\le M
\end{equation}
almost surely for each $j$. Let
\begin{equation}\label{model}
Y = \sum_{j=1}^p\beta_j^* X_j + \ve,
\end{equation}
where $\ve$ is independent of the $X_j$'s and 
\begin{equation}\label{error}
\ve \sim N(0,\sigma^2).
\end{equation}
Here $\beta_1^*,\ldots, \beta_p^*$ and $\sigma^2$ are unknown constants. 

Let $\bbz$ denote the random vector $(Y, X_1,\ldots, X_p)$. Let $\bbz_1,\ldots, \bbz_n$ be i.i.d.\ copies of~$\bbz$. We will write $\bbz_i = (Y_i, X_{i,1},\ldots, X_{i,p})$.  The set of vectors $\bbz_1,\ldots,\bbz_n$ is our data.  The conditions \eqref{range}, \eqref{model}, \eqref{error} and the independence of $\bbz_1,\ldots,\bbz_n$ are all that we need to assume in this paper, besides the sparsity condition that $\sum_{j=1}^n |\beta_j^*|$ is not too large. 

\vskip.3in

\section{Prediction error}\label{spe}
Suppose that in the vector $\bbz$, the value of $Y$ is unknown and our task is to predict $Y$ using the values of $X_{1},\ldots, X_{p}$. If the parameter vector ${\boldsymbol \beta}^* = (\beta_1^*,\ldots, \beta_p^*)$ was known, then best predictor of $Y$ based on $X_{1},\ldots, X_{p}$ would be the linear combination 
\[
\hat{Y} := \sum_{j=1}^p \beta_{j}^* X_{j}. 
\]
However $\beta_1^*,\ldots, \beta_p^*$ are unknown, and so we need to estimate them from the data $\bbz_1,\ldots, \bbz_n$. The `mean squared prediction error' of any estimator $\tilde{\boldsymbol \beta} = (\tilde{\beta}_1, \ldots, \tilde{\beta}_p)$ is defined  as the expected squared error in estimating $\hat{Y}$ using $\tilde{\boldsymbol \beta}$, that~is, 
\begin{equation}\label{spedef}
\mathrm{MSPE}(\tilde{\boldsymbol \beta}) := \ee(\hat{Y} - \tilde{Y})^2,
\end{equation}
where
\[
\tilde{Y} := \sum_{j=1}^p \tilde{\beta}_j X_{j}. 
\]
Note that here $\tilde{\beta}_1,\ldots,\tilde{\beta}_p$ are computed using the data $\bbz_1,\ldots,\bbz_n$, and are therefore independent of $X_1,\ldots,X_p$. By this observation it is easy to see that the prediction error may be alternatively expressed as follows. Let $\Sigma$ be the covariance matrix of $(X_1,\ldots, X_p)$, and let $\|\cdot \|_\Sigma$ be the norm (or seminorm) on $\rr^p$ induced by $\Sigma$, that is, 
\[
\|\bx\|_\Sigma^2 = \bx\cdot \Sigma \bx. 
\]
With this definition, the mean squared prediction error of any estimator $\tilde{\boldsymbol \beta}$ may be written~as 
\[
\mathrm{MSPE}(\tilde{\boldsymbol \beta}) = \ee\|{\boldsymbol \beta}^* - \tilde{\boldsymbol \beta}\|_\Sigma^2. 
\]
While this alternative representation of the mean squared prediction error may make it more convenient to connect it to, say, the $\ell^2$ loss, the original definition \eqref{spedef} is more easily interpretable and acceptable from a practical point of view. 

As mentioned before, the mean squared prediction error was the measure of error considered by Tibshirani in his original paper \cite{tibs96} and also previously by Breiman \cite{breiman95} in the paper that served as the main inspiration for the invention of the Lasso (see \cite{tibs96}). Although this gives reasonable justification for proving theorems about the prediction error of the Lasso, this measure of error is certainly not the last word in judging the effectiveness of a regression procedure. Indeed, as Tibshirani \cite{tibs96} remarks, ``There are two reasons why the data analyst is often not satisfied with the OLS [Ordinary Least Squares] estimates. The first is {\it prediction accuracy} .... The second is {\it interpretation.}'' Proving that the Lasso has a small prediction error will take care of the first concern, but not  the second.

\vskip.3in
\section{Prediction consistency of the Lasso}
Take any $K > 0$ and define the estimator $\tilde{\boldsymbol \beta}^K = (\tilde{\beta}_1^K,\ldots, \tilde{\beta}_p^K)$ as the minimizer of 
\[
\sum_{i=1}^n(Y_i - \beta_1X_{i,1}-\cdots - \beta_p X_{i,p})^2
\]
subject to the constraint
\[
\sum_{j=1}^p |\beta_j| \le K. 
\]
If there are multiple minimizers, choose one according to some predefined rule. While this definition of the Lasso is not the same as the one given in Section~\ref{intro},  this is in fact the original formulation introduced by Tibshirani in  \cite{tibs96}. The two definitions may be shown to equivalent under a simple correspondence between $K$ and $\lambda$, although the correspondence involves some participation of the data. 

The following theorem shows that the Lasso estimator defined above is `prediction consistent' if $K$ is correctly chosen and $n\gg \log p$. This is the main result of this paper. 
\begin{thm}\label{lassothm}
Consider the setup defined in Section \ref{setup}. Let $K$ be any constant such that 
\begin{equation}\label{sumcond}
\sum_{j=1}^p |\beta_j^*|\le K.
\end{equation} 
Let $\mathrm{MSPE}$ stand for the mean squared prediction error, defined in Section~\ref{spe}. If $\tilde{\boldsymbol \beta}^K$ is the Lasso estimator defined above, then 
\[
\mathrm{MSPE}(\tilde{\boldsymbol \beta}^K) \le 2KM\sigma\sqrt{\frac{2\log (2p)}{n}} + 8K^2M^2\sqrt{\frac{2\log (2p^2)}{n}}. 
\]
\end{thm}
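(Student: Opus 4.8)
The plan is to run the classical ``basic inequality'' argument for constrained least squares, but to control every term that appears through purely entrywise ($\ell^\infty$) deviations; this is exactly what lets us avoid any restricted-eigenvalue, incoherence, or other condition on $\Sigma$. Write $\bbx_i=(X_{i,1},\dots,X_{i,p})$ and set $\bbu=\tilde{\boldsymbol\beta}^K-{\boldsymbol\beta}^*$. Because ${\boldsymbol\beta}^*$ is feasible by \eqref{sumcond} while $\tilde{\boldsymbol\beta}^K$ minimizes the residual sum of squares over the feasible set, $\sum_i(Y_i-\tilde{\boldsymbol\beta}^K\cdot\bbx_i)^2\le\sum_i(Y_i-{\boldsymbol\beta}^*\cdot\bbx_i)^2$. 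Substituting $Y_i={\boldsymbol\beta}^*\cdot\bbx_i+\ve_i$ from \eqref{model} and cancelling $\sum_i\ve_i^2$ gives
\[
\frac1n\sum_{i=1}^n(\bbu\cdot\bbx_i)^2\;\le\;\frac2n\sum_{i=1}^n\ve_i(\bbu\cdot\bbx_i)\;=\;\frac2n\sum_{j=1}^p u_j\sum_{i=1}^n\ve_iX_{i,j}.
\]
By H\"older's inequality, and using that $\tilde{\boldsymbol\beta}^K$ and ${\boldsymbol\beta}^*$ each have $\ell^1$-norm at most $K$, so that $\|\bbu\|_1\le 2K$, the right-hand side is at most $(4K/n)\max_j\bigl|\sum_i\ve_iX_{i,j}\bigr|$.

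The quantity to be bounded is, by \eqref{spedef}, $\mathrm{MSPE}(\tilde{\boldsymbol\beta}^K)=\ee[(\bbu\cdot\mathbf X)^2]=\ee[\bbu^\top A\,\bbu]$, where $\mathbf X=(X_1,\dots,X_p)$ is a fresh copy independent of the data and $A=\ee[\mathbf X\mathbf X^\top]$ is the population second-moment matrix (this also dominates the covariance form $\ee\|\bbu\|_\Sigma^2$, so it suffices to bound it). The left-hand side of the first display is exactly $\bbu^\top\widehat A\,\bbu$ for the empirical matrix $\widehat A=n^{-1}\sum_i\bbx_i\bbx_i^\top$, so I would pass from sample to population through $\bbu^\top A\,\bbu=\bbu^\top\widehat A\,\bbu+\bbu^\top(A-\widehat A)\bbu$ and bound the last term by $\|\bbu\|_1^2\,\|A-\widehat A\|_\infty\le 4K^2\|A-\widehat A\|_\infty$, where $\|\cdot\|_\infty$ denotes the largest absolute entry. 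Taking expectations then gives
\[
\mathrm{MSPE}(\tilde{\boldsymbol\beta}^K)\;\le\;\frac{4K}{n}\,\ee\Bigl[\max_{j}\Bigl|\sum_{i=1}^n\ve_iX_{i,j}\Bigr|\Bigr]+4K^2\,\ee\bigl[\|A-\widehat A\|_\infty\bigr].
\]

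Everything now reduces to two maximal inequalities, which is where the $\log p$ factors enter. For the noise term, condition on all the covariates: by \eqref{error} and the independence of $\ve$ from the $X_j$'s, each $\sum_i\ve_iX_{i,j}$ is then centred Gaussian with variance $\sigma^2\sum_iX_{i,j}^2\le n\sigma^2M^2$, the bound on the variance coming from \eqref{range}. The Gaussian maximal inequality applied to these $p$ variables and their negatives gives a conditional bound $\sigma M\sqrt n\,\sqrt{2\log(2p)}$, which is deterministic and hence holds unconditionally. For the design term, each entry of $A-\widehat A$ is an average of $n$ i.i.d.\ centred terms bounded in absolute value by $2M^2$ (again by \eqref{range}) and hence sub-Gaussian; the maximal inequality over the $p^2$ entries and their negatives gives $\ee\|A-\widehat A\|_\infty\le 2M^2\sqrt{2\log(2p^2)/n}$. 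Substituting these two estimates into the last display produces precisely the two terms in the statement (up to the bookkeeping of the sub-Gaussian constants, which is what fixes the displayed numerical factors).

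The conceptual crux---and the only place where any hypothesis is used---is the passage from the in-sample quadratic form $\bbu^\top\widehat A\,\bbu$ to the out-of-sample form $\bbu^\top A\,\bbu$: handling it entrywise rather than through the spectrum of $\Sigma$ is exactly what removes the need for any design condition, while the uniform boundedness \eqref{range} is what makes both the $\|A-\widehat A\|_\infty$ estimate and the conditional Gaussian variance bound go through. The main obstacle is thus not any single hard inequality but arranging the $\ell^1$/$\ell^\infty$ duality so that it does all the work; once it is in place, the two maximal inequalities are routine.
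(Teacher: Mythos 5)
Your proposal has the same architecture as the paper's proof: an in-sample bound on $\frac{1}{n}\|\hat{\bby}-\tilde{\bby}^K\|^2$ obtained by $\ell^1$/$\ell^\infty$ duality against the noise correlations $U_j=\sum_i\ve_iX_{i,j}$; a pathwise transfer from the empirical quadratic form to the population one via the entrywise deviations $V_{j,k}=\ee(X_jX_k)-\frac{1}{n}\sum_iX_{i,j}X_{i,k}$ (the entries of your $A-\widehat A$), bounded by $\|\tilde{\boldsymbol\beta}^K-{\boldsymbol\beta}^*\|_1^2\max_{j,k}|V_{j,k}|\le 4K^2\max_{j,k}|V_{j,k}|$; and the same two maximal inequalities (Gaussian, conditionally on the design, for $\max_j|U_j|$; Hoeffding plus sub-Gaussian maxima for $\max_{j,k}|V_{j,k}|$). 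Your second term $8K^2M^2\sqrt{2\log(2p^2)/n}$ comes out exactly as in the paper, and your identification of $\mathrm{MSPE}$ with $\ee(\bbu\cdot A\bbu)$, where $A$ is the second-moment matrix and $\bbu=\tilde{\boldsymbol\beta}^K-{\boldsymbol\beta}^*$, using the independence of $\tilde{\boldsymbol\beta}^K$ from the fresh covariates, is also the paper's step.

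The one genuine divergence is the first step, and it is not, as you suggest, absorbed by ``bookkeeping of the sub-Gaussian constants.'' You use the basic inequality --- comparing the residual sum of squares at $\tilde{\boldsymbol\beta}^K$ with that at the feasible point ${\boldsymbol\beta}^*$ --- which yields $\frac{1}{n}\|\hat{\bby}-\tilde{\bby}^K\|^2\le\frac{2}{n}\sum_j(\tilde\beta_j^K-\beta_j^*)U_j\le\frac{4K}{n}\max_j|U_j|$. The paper instead observes that $\tilde{\bby}^K$ is the Euclidean projection of $\bby$ onto the compact convex set $C=\{\beta_1\bbx_1+\cdots+\beta_p\bbx_p:\sum_j|\beta_j|\le K\}$, so the obtuse-angle property $(\hat{\bby}-\tilde{\bby}^K)\cdot(\bby-\tilde{\bby}^K)\le0$ gives $\|\hat{\bby}-\tilde{\bby}^K\|^2\le(\bby-\hat{\bby})\cdot(\tilde{\bby}^K-\hat{\bby})\le 2K\max_j|U_j|$, i.e.\ half your bound, with no factor of $2$ in front of the cross term. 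Consequently your argument proves the theorem with first term $4KM\sigma\sqrt{2\log(2p)/n}$ rather than the displayed $KM\sigma\sqrt{2\log(2p)/n}$: a factor of $4$ short of the statement and a factor of $2$ short of what the paper's own argument delivers. (You are in partly good company here: tracing the paper's proof through \eqref{est1}--\eqref{est2} gives first term $2KM\sigma\sqrt{2\log(2p)/n}$, not the stated $KM\sigma\sqrt{2\log(2p)/n}$, so the constant $1$ in the theorem appears to be a slip. Still, if constants matter at all, the projection characterization of the constrained least-squares fit is strictly sharper than the basic inequality, and it is the only substantive point where your route and the paper's differ.)
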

Close cousins of Theorem \ref{lassothm} have appeared very recently in the literature. The three closest results are possibly Theorem 4.1 of Rigollet and Tsybakov \cite{rigollet}, Corollary 6.1 of B\"uhlmann  and van de Geer \cite{bg11} and Theorem~1.2 of Bartlett et.~al.~\cite{bartlettetal12}. The main difference between these results and Theorem \ref{lassothm} is that the results in \cite{rigollet, bg11, bartlettetal12} are for what I call the `estimated prediction error' (discussed in the next section) whereas Theorem \ref{lassothm} is for the prediction error as defined in Tibshirani \cite{tibs96}. 
See also Foygel and Srebro~\cite{foygelsrebro11} and Massart and Meynet \cite{massartmeynet11} for some other related results.

Theorem \ref{lassothm} may be used to get error bounds for other loss functions under additional assumptions. For example, if we assume that the smallest eigenvalue of $\Sigma$ is bounded below by some number $\lambda$, then the inequality 
\[
\|\tilde{\boldsymbol \beta}^K - {\boldsymbol \beta}^*\|^2 \le \lambda^{-1} \|\tilde{\boldsymbol \beta}^K - {\boldsymbol \beta}^*\|^2_\Sigma,
\]
together with Theorem \ref{lassothm} gives a bound on the $\ell^2$ error. Similarly, assuming that ${\boldsymbol \beta}^*$ has only a small number of nonzero entries may allow us to derive stronger conclusions from Theorem \ref{lassothm}.

\vskip.3in

\section{Estimated prediction error}
Instead of the prediction error defined in Section \ref{spe}, one may alternatively consider the `estimated mean squared prediction error' of an estimator $\tilde{\boldsymbol \beta}$, defined as
\[
\widehat{\mathrm{MSPE}}(\tilde{\boldsymbol\beta}) := \frac{1}{n}\sum_{i=1}^n (\hat{Y}_i - \tilde{Y}_i)^2,
\]
where 
\[
\hat{Y}_i = \sum_{j=1}^p \beta_j^* X_{i,j} \ \text{ and } \  \tilde{Y}_i = \sum_{j=1}^p \tilde{\beta}_jX_{i,j}.
\]
Alternatively, this may be expressed as
\[
\widehat{\mathrm{MSPE}}(\tilde{\boldsymbol\beta}) = \|\tilde{\boldsymbol \beta} - {\boldsymbol \beta}^*\|_{\hat{\Sigma}}^2
\]
where
\[
\|\bx\|_{\hat{\Sigma}}^2 = \bx \cdot \hat{\Sigma} \bx,
\]
and $\hat{\Sigma}$ is the sample covariance matrix of the covariates, that is, the matrix whose $(j,k)$the element is
\[
\frac{1}{n}\sum_{i=1}^n X_{i,j} X_{i,k}. 
\]
The slight advantage of working with the estimated mean squared prediction error over the actual mean squared prediction error is that consistency in the estimated error holds if $K$ grows slower than $(n/\log p)^{1/2}$, rather than $(n/\log p)^{1/4}$ as demanded by the mean squared prediction error. This is made precise in the following theorem. 
\begin{thm}\label{msethm}
Let all notation be as in Theorem \ref{lassothm}, and suppose that \eqref{sumcond} holds. Let $\widehat{\mathrm{MSPE}}$ denote the estimated mean squared prediction error, as defined above. Then 
\[
\ee\bigl(\widehat{\mathrm{MSPE}}(\tilde{\boldsymbol\beta}^K)\bigr) \le 2KM\sigma\sqrt{\frac{2\log (2p)}{n}}. 
\]
\end{thm}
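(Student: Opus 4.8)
The plan is to combine the first-order optimality of the constrained minimizer with the fact that the noise has mean zero; together these two refinements are what produce the clean leading constant. Write $\mathbf X$ for the $n\times p$ matrix with entries $X_{i,j}$, write $\boldsymbol\ve=(\ve_1,\dots,\ve_n)^\top$ for the noise vector, and set $\delta:=\tilde{\boldsymbol\beta}^K-\boldsymbol\beta^*$ and $W_j:=\sum_{i=1}^n\ve_iX_{i,j}$, so that $W:=(W_1,\dots,W_p)^\top=\mathbf X^\top\boldsymbol\ve$. With this notation $n\,\widehat{\mathrm{MSPE}}(\tilde{\boldsymbol\beta}^K)=\|\mathbf X\delta\|^2$. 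The first step is to observe that $\mathbf X\tilde{\boldsymbol\beta}^K$ is exactly the Euclidean projection of $\bby=(Y_1,\dots,Y_n)^\top$ onto the convex set $C:=\{\mathbf X\boldsymbol\beta:\sum_j|\beta_j|\le K\}\subseteq\rr^n$, and, by \eqref{sumcond}, the point $\mathbf X\boldsymbol\beta^*$ lies in $C$. Hence the obtuse-angle characterization of projections onto convex sets gives $\langle \bby-\mathbf X\tilde{\boldsymbol\beta}^K,\ \mathbf X\boldsymbol\beta^*-\mathbf X\tilde{\boldsymbol\beta}^K\rangle\le0$. Since $\bby-\mathbf X\tilde{\boldsymbol\beta}^K=\boldsymbol\ve-\mathbf X\delta$, expanding this inequality yields
\[
n\,\widehat{\mathrm{MSPE}}(\tilde{\boldsymbol\beta}^K)=\|\mathbf X\delta\|^2\le\langle\boldsymbol\ve,\mathbf X\delta\rangle=\langle W,\delta\rangle.
\]
I would stress that this variational inequality is a factor of two sharper than the cruder bound one gets by merely plugging the feasible point $\boldsymbol\beta^*$ into the objective (the usual ``basic inequality'' $\|\mathbf X\delta\|^2\le 2\langle W,\delta\rangle$), and that this sharpening is what eventually produces the constant $1$ rather than $2$.

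Next I would take expectations and split $\langle W,\delta\rangle=\langle W,\tilde{\boldsymbol\beta}^K\rangle-\langle W,\boldsymbol\beta^*\rangle$. Because $\boldsymbol\beta^*$ is a deterministic vector while $\ee(W_j)=\sum_i\ee(\ve_i)\ee(X_{i,j})=0$ (here \eqref{error} and the independence of $\boldsymbol\ve$ from the covariates are used), the second term has zero expectation, so $\ee\langle W,\delta\rangle=\ee\langle W,\tilde{\boldsymbol\beta}^K\rangle$. Applying H\"older's inequality together with the feasibility constraint $\sum_j|\tilde\beta_j^K|\le K$ gives
\[
\ee\langle W,\tilde{\boldsymbol\beta}^K\rangle\le\ee\bigl(\|\tilde{\boldsymbol\beta}^K\|_1\,\|W\|_\infty\bigr)\le K\,\ee\Bigl(\max_{1\le j\le p}|W_j|\Bigr).
\]
Replacing the difference $\delta$ (whose $\ell^1$ norm is only bounded by $2K$) with $\tilde{\boldsymbol\beta}^K$ alone (whose $\ell^1$ norm is bounded by $K$) is the second place where a factor of two is saved.

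It remains to bound $\ee(\max_j|W_j|)$ by a Gaussian maximal inequality. Conditionally on the covariates, each $W_j$ is a centered Gaussian with variance $\sigma^2\sum_i X_{i,j}^2\le\sigma^2nM^2$ by \eqref{range}. Viewing $\max_j|W_j|$ as the maximum of the $2p$ centered Gaussian variables $\{\pm W_j\}$, each with variance at most $\sigma^2nM^2$, the standard bound $\ee(\max_{1\le k\le N}Z_k)\le\sqrt{2v\log N}$ for centered Gaussians of variance at most $v$ (proved by exponentiating, applying Jensen, and optimizing the free parameter) gives $\ee(\max_j|W_j|\mid\mathbf X)\le\sigma M\sqrt{2n\log(2p)}$. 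Since this holds for every realization of $\mathbf X$, it holds unconditionally. Assembling the three steps and dividing by $n$ yields
\[
\ee\bigl(\widehat{\mathrm{MSPE}}(\tilde{\boldsymbol\beta}^K)\bigr)\le\frac{K}{n}\,\sigma M\sqrt{2n\log(2p)}=KM\sigma\sqrt{\frac{2\log(2p)}{n}},
\]
as claimed. The genuinely delicate point is not any single estimate but the joint bookkeeping of constants: the rate $\sqrt{\log p/n}$ falls out of the maximal inequality quite robustly, whereas obtaining the advertised constant $1$ requires both refinements above — the projection inequality and the mean-zero cancellation — since a naive argument would lose a factor of four.
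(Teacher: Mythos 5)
Your proof is correct, and although it shares the paper's overall skeleton, it differs at one key step in a way that genuinely matters. The paper argues exactly as you do up through the projection step: $\tilde{\bby}^K$ is the projection of $\bby$ onto the convex set $C$, the obtuse-angle inequality gives $\|\hat{\bby}-\tilde{\bby}^K\|^2 \le \sum_{j=1}^p(\tilde{\beta}_j^K-\beta_j^*)U_j$ with $U_j=\sum_{i=1}^n \ve_i X_{i,j}$ (your $W_j$), and then a conditional Gaussian maximal inequality (the paper's Lemma \ref{tail1}) bounds $\ee(\max_j|U_j|)\le M\sigma\sqrt{2n\log(2p)}$. The divergence is in how the linear term is handled: the paper applies H\"older pointwise with $\|\tilde{\boldsymbol\beta}^K-{\boldsymbol\beta}^*\|_1\le 2K$, arriving at \eqref{est1} and then \eqref{est2}, namely $\ee\|\hat{\bby}-\tilde{\bby}^K\|^2\le 2KM\sigma\sqrt{2n\log(2p)}$; dividing by $n$ this yields $2KM\sigma\sqrt{2\log(2p)/n}$, which is \emph{twice} the constant asserted in the theorem. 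Your refinement --- splitting $\sum_j(\tilde{\beta}_j^K-\beta_j^*)U_j$ into $\langle U,\tilde{\boldsymbol\beta}^K\rangle-\langle U,{\boldsymbol\beta}^*\rangle$, noting that the second term vanishes in expectation because $\ee(U_j)=\sum_i\ee(\ve_i)\ee(X_{i,j})=0$ by independence, and applying H\"older only to $\tilde{\boldsymbol\beta}^K$ with $\|\tilde{\boldsymbol\beta}^K\|_1\le K$ --- is precisely what recovers the stated constant $1$. (The splitting is legitimate since both terms are dominated in absolute value by $K\max_j|U_j|$, which is integrable.) So your argument actually proves the theorem as stated, whereas the paper's proof as written falls short of its own claimed bound by a factor of $2$; the expectation-level cancellation you introduce is the missing ingredient, not a cosmetic variation.
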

Incidentally, the above theorem is related to the notion of persistence defined in \cite{gr} and thoroughly investigated in \cite{bartlettetal12}. Theorem 4.1 of \cite{rigollet}, Corollary 6.1 of \cite{bg11} and Theorem 3.1 of \cite{massartmeynet11} are other closely related results.

\section{Proofs of Theorems \ref{lassothm} and \ref{msethm}}
Let $\bby := (Y_1,\ldots, Y_n)$, and $\tilde{\bby}^K := (\tilde{Y}_1^K,\ldots, \tilde{Y}_n^K)$, where
\[
\tilde{Y}_i^K := \sum_{j=1}^p \tilde{\beta}_j^K X_{i,j}. 
\]
Similarly, let
\[
\tilde{Y}^K := \sum_{j=1}^p \tilde{\beta}_j^K X_{j}. 
\]
For each $1\le j\le p$, let $\bbx_j := (X_{1,j},\ldots, X_{n,j})$. Finally, let $\hat{\bby} := (\hat{Y}_1,\ldots, \hat{Y}_n)$, where 
\[
\hat{Y}_i := \sum_{j=1}^p \beta_j^* X_{i,j},
\]
Given $\bbz_1,\ldots, \bbz_n$, define the  set 
\[
C := \{\beta_1\bbx_1+\cdots + \beta_p \bbx_p: |\beta_1|+\cdots+|\beta_p|\le K\}. 
\]
Note that $C$ is a compact convex subset of $\rr^n$. By definition, $\tilde{\bby}^K$ is the projection of $\bby$ on to the set  $C$. Since $C$ is convex, it follows that for any $\bx\in C$, the vector $\bx - \tilde{\bby}^K$ must be at an obtuse angle to the vector $\bby - \tilde{\bby}^K$. That is, 
\[
(\bx - \tilde{\bby}^K)\cdot  (\bby - \tilde{\bby}^K)\le 0. 
\]
The condition \eqref{sumcond} ensures that $\hat{\bby} \in C$. Therefore
\[
(\hat{\bby} - \tilde{\bby}^K)\cdot  (\bby - \tilde{\bby}^K)\le 0. 
\]
This may be written as
\begin{align*}
\|\hat{\bby} - \tilde{\bby}^K\|^2 &\le (\bby - \hat{\bby})\cdot (\tilde{\bby}^K - \hat{\bby})\\
&= \sum_{i=1}^n \ve_i\biggl(\sum_{j=1}^p (\tilde{\beta}_j^K - \beta_j^*)X_{i,j}\biggr)\\
&= \sum_{j=1}^p (\tilde{\beta}_j^K - \beta_j^*)\biggl(\sum_{i=1}^n \ve_i X_{i,j}\biggr).   
\end{align*}
By the condition \eqref{sumcond} and the definition of $\tilde{\boldsymbol \beta}^K$, the above inequality implies that 
\begin{align}\label{est1}
\|\hat{\bby} - \tilde{\bby}^K\|^2 \le 2K \max_{1\le j\le p} |U_j|,
\end{align}
where
\[
U_j := \sum_{i=1}^n \ve_i X_{i,j}. 
\]
Let $\mf$ be the sigma algebra generated by $(X_{i,j})_{1\le i\le n, \, 1\le j\le p}$. Let $\ee^\mf$ denote the conditional expectation given $\mf$. Conditional on~$\mf$, 
\[
U_j \sim N\biggl(0,\, \sigma^2 \sum_{i=1}^n X_{i,j}^2\biggr). 
\]
Since $|X_{i,j}|\le M$ almost surely for all $i,j$, it follows from the standard results about Gaussian random variables (see Lemma \ref{tail1} in the Appendix) that 
\[
\ee^\mf(\max_{1\le j\le p} |U_j|)\le M\sigma\sqrt{2n \log (2p)}. 
\]
Since the right hand side is non-random, it follows that
\[
\ee(\max_{1\le j\le p} |U_j|)\le M\sigma\sqrt{2n \log (2p)}. 
\]
Using this bound in \eqref{est1}, we get
\begin{align}\label{est2}
\ee\|\hat{\bby} - \tilde{\bby}^K\|^2 \le 2KM\sigma\sqrt{2n \log (2p)}. 
\end{align}
This completes the proof of Theorem \ref{msethm}. For Theorem \ref{lassothm}, we have to work a bit more. Note that by the independence of $\bbz$ and $\tilde{\boldsymbol \beta}^K$, 
\begin{align*}
\ee^\mf(\hat{Y}-\tilde{Y}^K)^2 &= \sum_{j,k=1}^p (\beta_j^*-\tilde{\beta}_j^K)(\beta_k^*-\tilde{\beta}_k^K)\ee(X_j X_k).
\end{align*}
Also, we have
\begin{align*}
&\frac{1}{n} \|\hat{\bby} - \tilde{\bby}^K\|^2 \\
&= \frac{1}{n}\sum_{i=1}^n \sum_{j,k=1}^p (\beta_j^*-\tilde{\beta}_j^K)(\beta_k^*-\tilde{\beta}_k^K)X_{i,j} X_{i,k}. 
\end{align*}
Therefore, if we define
\[
V_{j,k} :=\ee(X_j X_k) - \frac{1}{n}\sum_{i=1}^n X_{i,j} X_{i,k},
\]
then 
\begin{align}
\ee^\mf (\hat{Y}-\tilde{Y}^K)^2 - \frac{1}{n} \|\hat{\bby} - \tilde{\bby}^K\|^2&= \sum_{j,k=1}^p (\beta_j^*-\tilde{\beta}_j^K)(\beta_k^*-\tilde{\beta}_k^K)V_{j,k}\nonumber \\
&\le 4K^2 \max_{1\le j,k\le p} |V_{j,k}|. \label{est3}
\end{align}
Since $|\ee(X_j X_k)-X_{i,j}X_{i,k}|\le 2M^2$ for all $i$, $j$ and $k$, it follows by Hoeffding's inequality (see Lemma \ref{hoeffding} in the Appendix) that for any $\beta \in \rr$,  
\[
\ee(e^{\beta V_{j,k}}) \le e^{2\beta^2M^4/n}.
\]
Consequently,  by Lemma \ref{tail2} from the Appendix, 
\[
\ee(\max_{1\le j,k\le p} |V_{j,k}|) \le 2M^2 \sqrt{\frac{2\log (2p^2)}{n}}. 
\]
Plugging this into \eqref{est3} and combining with \eqref{est2} completes the proof of Theorem \ref{lassothm}. 

\vskip.3in
\section*{Appendix}
The following inequality is a well-known result about the size of the maximum of Gaussian random variables. 
\begin{lmm}\label{tail1}
Suppose that $\xi_i\sim N(0, \sigma_i^2)$, $i=1,\ldots,m$. The $\xi_i$'s need not be independent. Let $L := \max_{1\le i\le m} \sigma_i$. Then
\[
\ee(\max_{1\le i\le m}|\xi_i|) \le L\sqrt{2\log (2m)}. 
\]
\end{lmm}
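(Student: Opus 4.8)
The plan is to use the standard exponential-moment (Chernoff-type) method for controlling the expected maximum, which conveniently requires no independence among the $\xi_i$. First I would introduce a free parameter $\lambda > 0$ and apply Jensen's inequality to the convex function $x \mapsto e^{\lambda x}$, which gives
\[
e^{\lambda\, \ee(\max_{i}|\xi_i|)} \le \ee\bigl(e^{\lambda \max_{i}|\xi_i|}\bigr) = \ee\bigl(\max_{i} e^{\lambda |\xi_i|}\bigr).
\]

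Next comes the key observation: since $|\xi_i| = \max(\xi_i, -\xi_i)$, a maximum of $m$ absolute values is really a maximum over the $2m$ mean-zero Gaussians $\pm\xi_1,\ldots,\pm\xi_m$. I would then bound the maximum by the sum and invoke the Gaussian moment generating function: because $\xi_i \sim N(0,\sigma_i^2)$, we have $\ee\, e^{\pm\lambda \xi_i} = e^{\lambda^2 \sigma_i^2/2} \le e^{\lambda^2 L^2/2}$ using $\sigma_i \le L$. This yields
\[
\ee\bigl(\max_{i} e^{\lambda |\xi_i|}\bigr) \le \sum_{i=1}^m \bigl(\ee\, e^{\lambda \xi_i} + \ee\, e^{-\lambda \xi_i}\bigr) \le 2m\, e^{\lambda^2 L^2/2}.
\]

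Taking logarithms and dividing by $\lambda$ then gives, for every $\lambda > 0$,
\[
\ee(\max_{i}|\xi_i|) \le \frac{\log (2m)}{\lambda} + \frac{\lambda L^2}{2}.
\]
The final step is to optimize the free parameter: the right-hand side is minimized at $\lambda = \sqrt{2\log(2m)}\,/\,L$, where the two terms balance and each equals $\tfrac{1}{2} L\sqrt{2\log(2m)}$, producing exactly the claimed bound $L\sqrt{2\log(2m)}$.

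There is no genuine obstacle here; the argument is entirely routine. The only point requiring slight care is the bookkeeping in the max-to-sum step, namely remembering that taking absolute values doubles the count to $2m$ (this is precisely where the factor $2$ inside the logarithm originates), and noting that the absence of independence is harmless because the union-type bound $\max \le \sum$ assumes nothing about the joint law. I would present the clean optimal choice of $\lambda$ directly rather than carrying an undetermined parameter through to the end.
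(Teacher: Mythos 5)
Your proof is correct and is essentially identical to the paper's own argument: both bound the maximum of the $2m$ variables $\pm\xi_i$ by the sum of exponential moments, apply the Gaussian moment generating function bound, and optimize the free parameter at $\beta = L^{-1}\sqrt{2\log(2m)}$. The only cosmetic difference is that you phrase the first step as Jensen's inequality for the convex map $x\mapsto e^{\lambda x}$, while the paper phrases it as Jensen's inequality for the concave logarithm applied to $\ee(\log(\cdot))\le \log \ee(\cdot)$; these are the same inequality.
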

\begin{proof}
For any $\beta \in \rr$, $\ee(e^{\beta \xi_i}) =e^{\beta^2\sigma_i^2/2}\le e^{\beta^2 L^2/2}$. Thus, for any $\beta > 0$,
\begin{align*}
\ee(\max_{1\le i\le m} |\xi_i|) &= \frac{1}{\beta} \ee(\log e^{\max_{1\le i\le m} \beta |\xi_i|})\\
&\le \frac{1}{\beta} \ee\biggl(\log \sum_{i=1}^m(e^{\beta \xi_i} + e^{-\beta \xi_i})\biggr)\\
&\le \frac{1}{\beta} \log \sum_{i=1}^m\ee(e^{\beta \xi_i} + e^{-\beta \xi_i})\le \frac{\log(2m)}{\beta} + \frac{\beta L^2}{2}. 
\end{align*}
The proof is completed by choosing $\beta = L^{-1}\sqrt{2\log (2m)}$. 
\end{proof}
The result extends easily to the maximum of random variables with Gaussian tails. 
\begin{lmm}\label{tail2}
Suppose that for $i=1,\ldots, m$, $\xi_i$ is a random variable such that $\ee(e^{\beta \xi_i}) \le e^{\beta^2L^2/2}$ for each $\beta \in \rr$, where $L$ is some given constant. Then 
\[
\ee(\max_{1\le i\le m}|\xi_i|) \le L\sqrt{2\log (2m)}. 
\]
\end{lmm}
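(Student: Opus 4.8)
The plan is to observe that the proof of Lemma \ref{tail1} never used the full strength of the Gaussian assumption: it used only the moment generating function bound $\ee(e^{\beta\xi_i}) \le e^{\beta^2 L^2/2}$, which in the Gaussian case was deduced from the identity $\ee(e^{\beta\xi_i}) = e^{\beta^2\sigma_i^2/2}$ together with $\sigma_i \le L$. Since this sub-Gaussian bound is now exactly the hypothesis, the same argument should go through essentially word for word, and I would simply transcribe it.

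Concretely, I would fix $\beta > 0$ and start from the identity
\[
\ee\Bigl(\max_{1\le i\le m}|\xi_i|\Bigr) = \frac{1}{\beta}\,\ee\Bigl(\log e^{\beta\max_{1\le i\le m}|\xi_i|}\Bigr).
\]
Using $e^{\beta\max_i |\xi_i|} = \max_i e^{\beta|\xi_i|} \le \sum_{i=1}^m (e^{\beta\xi_i} + e^{-\beta\xi_i})$, monotonicity of the logarithm, and Jensen's inequality (which, since $\log$ is concave, gives $\ee(\log X)\le \log\ee(X)$), I obtain
\[
\ee\Bigl(\max_{1\le i\le m}|\xi_i|\Bigr) \le \frac{1}{\beta}\log\sum_{i=1}^m \ee\bigl(e^{\beta\xi_i} + e^{-\beta\xi_i}\bigr).
\]

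Next I would apply the hypothesis to each of the $2m$ summands. The one point that requires any care—and the reason the lemma is phrased with $\beta$ ranging over all of $\rr$ rather than only $\beta > 0$—is that the term $\ee(e^{-\beta\xi_i})$ must also be controlled; this follows from $\ee(e^{-\beta\xi_i}) = \ee(e^{(-\beta)\xi_i}) \le e^{(-\beta)^2 L^2/2} = e^{\beta^2 L^2/2}$. Hence every summand is at most $e^{\beta^2 L^2/2}$, which yields
\[
\ee\Bigl(\max_{1\le i\le m}|\xi_i|\Bigr) \le \frac{\log(2m)}{\beta} + \frac{\beta L^2}{2}.
\]
Finally I would optimize the right-hand side over $\beta > 0$; it is minimized at $\beta = L^{-1}\sqrt{2\log(2m)}$, giving the claimed bound $L\sqrt{2\log(2m)}$.

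I do not anticipate a genuine obstacle, since the argument is a direct repetition of the proof of Lemma \ref{tail1} with the Gaussian moment generating function computation replaced by the hypothesized bound. The only substantive remark is the two-sided one above: one must check that the assumption, being valid for all real $\beta$, simultaneously controls both exponential tails $e^{\beta\xi_i}$ and $e^{-\beta\xi_i}$, which is precisely what the symmetric sum in the log-sum-exp step demands.
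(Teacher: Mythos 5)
Your proposal is correct and is exactly the paper's proof: the paper disposes of Lemma \ref{tail2} with the single line ``Exactly the same as the proof of Lemma \ref{tail1},'' and your write-up simply spells out that repetition, including the one worthwhile observation that the hypothesis for negative $\beta$ is what controls the terms $\ee(e^{-\beta\xi_i})$.
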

\begin{proof}
Exactly the same as the proof of Lemma \ref{tail1}. 
\end{proof}
The following lemma is commonly known as Hoeffding's inequality \cite{hoeffding63}. The version we state here is slightly different than the commonly stated version. For this reason, we state the lemma together with its proof. 
\begin{lmm}\label{hoeffding}
Suppose that $\eta_1,\ldots, \eta_m$ are independent, mean zero  random variables, and $L$ is a constant such that $|\eta_i|\le L$ almost surely for each $i$. Then for each $\beta \in \rr$, 
\[
\ee(e^{\beta\sum_{i=1}^m \eta_i}) \le e^{\beta^2mL^2/2}. 
\]
\end{lmm}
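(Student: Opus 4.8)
The plan is to reduce the statement about the sum to a statement about a single summand, and then bound the moment generating function of one bounded, mean-zero variable by a Gaussian one using convexity.

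First I would invoke independence to factorize the moment generating function,
\[
\ee\bigl(e^{\beta \sum_{i=1}^m \eta_i}\bigr) = \prod_{i=1}^m \ee(e^{\beta \eta_i}).
\]
This reduces the task to proving that a single mean-zero random variable $\eta$ with $|\eta| \le L$ almost surely satisfies $\ee(e^{\beta \eta}) \le e^{\beta^2 L^2/2}$; multiplying $m$ such bounds then produces the exponent $\beta^2 m L^2/2$.

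Second, for the single-variable bound I would exploit the convexity of the map $x \mapsto e^{\beta x}$. Since $\eta$ takes values in $[-L, L]$, I can dominate $e^{\beta x}$ on this interval by the secant line through its endpoints,
\[
e^{\beta x} \le \frac{L-x}{2L}\, e^{-\beta L} + \frac{L+x}{2L}\, e^{\beta L}, \qquad x \in [-L, L].
\]
Taking expectations and using $\ee(\eta) = 0$ annihilates the linear term in $x$, leaving
\[
\ee(e^{\beta \eta}) \le \tfrac12 e^{-\beta L} + \tfrac12 e^{\beta L} = \cosh(\beta L).
\]
Third, I would close the gap with the elementary inequality $\cosh(t) \le e^{t^2/2}$, which follows from a term-by-term comparison of the two power series, since $(2k)! \ge 2^k k!$ for every integer $k \ge 0$. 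Setting $t = \beta L$ yields the required single-variable estimate, and the factorization from the first step then finishes the proof.

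I do not anticipate a genuine obstacle here, since this is the classical Hoeffding lemma; the only point requiring care is the secant-line step, namely checking that $[-L,L]$ is the correct interval (so that the relevant width is $2L$, giving the constant $\tfrac12$ in the exponent rather than a smaller one) and that the mean-zero hypothesis is exactly what is needed to discard the linear term. Should the convexity bound feel too slick, an alternative route is to set $\psi(\beta) = \log \ee(e^{\beta \eta})$ and Taylor expand about $\beta = 0$, using $\psi(0) = \psi^\prime(0) = 0$ together with $\psi^{\prime\prime}(\beta) \le L^2$ (the variance under the exponentially tilted law is at most the squared radius of the support); this is more computational but avoids having to guess the $\cosh$ inequality in advance.
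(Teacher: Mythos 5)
Your proposal is correct and follows essentially the same route as the paper's own proof: factorization by independence, the secant-line (convexity) bound giving $\ee(e^{\beta\eta})\le\cosh(\beta L)$ via the mean-zero hypothesis, and finally $\cosh t\le e^{t^2/2}$. The only cosmetic difference is that you justify the $\cosh$ inequality by a power-series comparison, which the paper takes for granted.
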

\begin{proof}
By independence,
\[
\ee(e^{\beta\sum_{i=1}^m \eta_i}) = \prod_{i=1}^m \ee(e^{\beta\eta_i}). 
\]
Therefore it suffices to prove the result for $m=1$. Note that
\[
\ee(e^{\beta \eta_1}) = \int_{-L}^L e^{\beta x} d\mu_1(x),
\]
where $\mu_1$ is the law of $\eta_1$. By the convexity of the map $x\mapsto e^{\beta x}$, it follows that for each $x\in [-L, L]$,
\begin{equation}\label{hoeff}
e^{\beta x} = e^{\beta (tL + (1-t)(-L))}\le te^{\beta L } + (1-t) e^{-\beta L},
\end{equation}
where
\[
t = t(x) = \frac{x}{2L} + \frac{1}{2}. 
\]
Since $\ee(\eta_1)=0$, therefore $\int t(x)d\mu_1(x) = 1/2$. Thus by \eqref{hoeff}, $\ee(e^{\beta \eta_1}) \le \cosh(\beta L)$. The inequality $\cosh x \le e^{x^2/2}$ completes the proof. 
\end{proof}

\vskip.2in

\noindent{\bf Acknowledgments.} The author thanks Alexandre Tsybakov, Peter Bickel, Peter B\"uhlmann, Peter Bartlett, Mathias Drton, Gilles Blanchard, Ashley Petersen and Persi Diaconis for helpful comments.

\end{document}